\documentclass[11pt]{amsart}  

\usepackage{amsmath}
\usepackage{amsthm}
\usepackage{amsfonts}
\usepackage{amssymb}
\usepackage{eucal}
\usepackage{yfonts}
\usepackage[all]{xy}
\usepackage{amsxtra}
\usepackage{appendix} 
\usepackage{tensor} 
\usepackage{url}
\usepackage{upgreek}
\usepackage{etoolbox}
\apptocmd{\sloppy}{\hbadness 10000\relax}{}{}

\usepackage[pdftex, breaklinks, linktocpage=true, bookmarksopen=true,bookmarksopenlevel=0,bookmarksnumbered=true]{hyperref}

\RequirePackage{color}
\definecolor{bwgreen}{rgb}{0.183,1,0.5}
\definecolor{bwmagenta}{rgb}{0.7,0.0,0.1}
\definecolor{bwblue}{rgb}{0.317,0.161,1}

\hypersetup{
pdftitle = {On the Atkin operator in characteristic p},
pdfauthor = {\textcopyright\ Bryden Cais},
pdfcreator = {\LaTeX\ with package \flqq hyperref\frqq},
colorlinks = {true},
linkcolor={bwmagenta},	 
anchorcolor={},	 
citecolor={bwblue},	
filecolor={},	
menucolor={},	
runcolor={},	
urlcolor={bwgreen}, 
}

   \topmargin=-0.2in
   \oddsidemargin=-0.1in
   \evensidemargin=-0.1in
   \textwidth=6.7in
   \textheight=8.6in
   \headheight=0.075in

\CompileMatrices
\entrymodifiers={+!!<0pt,\fontdimen22\textfont2>}

\DeclareFontFamily{OT1}{rsfs}{}
\DeclareFontShape{OT1}{rsfs}{n}{it}{<-> rsfs10}{}
\DeclareMathAlphabet{\mathscr}{OT1}{rsfs}{n}{it}

\DeclareFontFamily{OT1}{pzc}{}
\DeclareFontShape{OT1}{pzc}{n}{it}{<->s*[2.2]pzc}{}
\DeclareMathAlphabet{\mathpzc}{OT1}{pzc}{b}{sl}


\makeatletter

\newcommand{\Rmnum}[1]{\expandafter\@slowromancap\romannumeral #1@}
\makeatother



\DeclareMathOperator{\ord}{ord}

\newcommand*{\pr}{\rho}

\DeclareMathOperator{\GL}{GL}
\DeclareMathOperator{\SL}{SL}

\DeclareMathOperator{\Ig}{Ig}

\DeclareMathOperator{\Ell}{Ell}

\DeclareMathOperator{\cusps}{cusps}


\newcommand*{\Z}{\ensuremath{\mathbf{Z}}}

\renewcommand*{\k}{\kappa}




\newcommand*{\C}{\mathbf{C}}
\newcommand*{\E}{\mathscr{E}}     
\newcommand*{\F}{\mathbf{F}}

\newcommand*{\I}{\mathscr{I}}                               

\renewcommand*{\L}{\mathscr{L}}

\renewcommand*{\O}{\mathscr{O}}


\newcommand*{\scrP}{\mathscr{P}}





\renewcommand*{\int}{\ensuremath{\mathrm{int}}}

\renewcommand*{\SS}{\ensuremath{{\mathrm{ss}}}}

\renewcommand*{\o}[1]{\overline{#1}}

\newcommand*{\wt}[1]{\widetilde{#1}}

\newcommand*{\can}{\text{-}\mathrm{can}}



\theoremstyle{plain}
  \newtheorem{theorem}{Theorem}
  \newtheorem{proposition}[theorem]{Proposition}
  
  \newtheorem{corollary}[theorem]{Corollary}

\theoremstyle{definition}

\theoremstyle{remark}

\include{header}

\numberwithin{theorem}{section}  
\numberwithin{equation}{section}









\begin{document}
\title{On the $U_p$ operator in characteristic $p$}

\author{Bryden Cais}
\address{University of Arizona, Tucson}
\curraddr{Department of Mathematics, 617 N. Santa Rita Ave., Tucson AZ. 85721}
\email{cais@math.arizona.edu}

\thanks{
	During the writing of this paper, the author was partially supported by an NSA Young Investigator grant
	(H98230-12-1-0238).
	We are very grateful to David Zureick-Brown for many helpful conversations.
	}

\subjclass[2010]{Primary: 11F33, 11G18}
\keywords{Mod $p$ modular forms, Atkin $U_p$-operator}
\date{\today}

\begin{abstract}
	For a perfect field $\k$ of characteristic $p>0$,
	a positive ingeger $N$ not divisible by $p$, 
	and an arbitrary subgroup $\Gamma$ of $\GL_2(\Z/N\Z)$,
	we prove (with mild additional hypotheses when $p\le 3$)
	that the $U$-operator on the space $M_k(\scrP_{\Gamma}/\k)$
	of (Katz) modular forms for $\Gamma$ over $\k$ induces a surjection 
	$U:M_{k}(\scrP_{\Gamma}/\k)\twoheadrightarrow M_{k'}(\scrP_{\Gamma}/\k)$
	for all $k\ge p+2$, where $k'=(k-k_0)/p + k_0$ with $2\le k_0\le  p+1$
	the unique integer congruent to $k$ modulo $p$.  When $\k=\F_p$, 
	$p\ge 5$, $N\neq 2,3$, and 
	$\Gamma$  is the subgroup of upper-triangular or 
	upper-triangular unipotent matrices,
	this recovers a recent result of Dewar \cite{Dewar}. 
\end{abstract}

\maketitle

\section{Introduction}\label{intro}

Fix a prime $p$, 
an integer $N>0$ with $p\nmid N$, and a 
subgroup $\Gamma$ of $\GL_2(\Z/N\Z)$.
Let $\wt{\Gamma}$ be the preimage in $\SL_2(\Z)$ of $\Gamma_0:=\Gamma\cap \SL_2(\Z/N\Z)$,
and write $\wt{M}_k(\wt{\Gamma})$ for the space of
weight $k$ mod $p$ modular forms for $\wt{\Gamma}$
(in the sense of Serre \cite[\S1.2]{Serre}).
When $N=1$, a classical result of Serre \cite[\S2.2, Th\'ero\`eme 6]{Serre}
asserts that the $U_p$ operator is a contraction: for $k\ge p+2$,
the map $U_p: \wt{M}_k(\Gamma(1))\rightarrow \wt{M}_k(\Gamma(1))$
factors through the subspace $\wt{M}_{k'}(\Gamma(1))$ for some $k'<k$
satisfying $pk' \le k+ p^2-1$. In fact, Serre's result may be generalized and significantly sharpened:

\begin{theorem}\label{Main}
	Let $\k$ be a perfect field of characteristic $p$ and
	denote by $M_k(\scrP_{\Gamma}/\k)$ the space of weight $k$ Katz modular forms for 
		$\Gamma$ over $\k$ $($see $\S\ref{Geometry}$$)$.
	Let $k_0$ be 
	the unique integer between $2$ and $p+1$ congruent to $k$ modulo $p$, and 
	if $p\le 3$, assume that $N > 4$ and that 
	$\Gamma_0$ is a subgroup 
	of the upper-triangular unipotent matrices.
	Then for $k\ge p+2$, the $U$-operator $($see $\S\ref{Geometry}$$)$ acting on 
	$M_k(\scrP_{\Gamma}/\k)$ induces a surjection
	$U:M_k(\scrP_{\Gamma}/\k)\twoheadrightarrow M_{k'}(\scrP_{\Gamma}/\k)$, for $k':=(k-k_0)/p +k_0$.
\end{theorem}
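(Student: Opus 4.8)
My plan is to prove surjectivity cohomologically, by passing to Serre duality and reducing to the injectivity of an explicit Frobenius-type operator, which the Hasse invariant then lets me control. Set $X := \scrP_{\Gamma}$, with Hodge bundle $\omega$, Hasse invariant $A \in H^0(X,\omega^{\otimes(p-1)})$ (whose divisor is the reduced supersingular locus $S$, away from elliptic/stacky points), and Kodaira--Spencer isomorphism $\Omega^1_X \cong \omega^{\otimes 2}(-\cusps)$; write $X' := X^{(p)}$ and $\Frob\colon X\to X'$ for the relative Frobenius, so that $\Frob^*\omega' \cong \omega^{\otimes p}$ and $\Frob$ is finite flat of degree $p$. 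Since $A$ has $q$-expansion $1$, multiplication by $A^{(k-k_0)/p}$ identifies the induced $U$ with the weight-preserving Atkin operator followed by division by this Hasse power; thus it suffices to exhibit, for each $g\in M_{k'}(\scrP_{\Gamma}/\k)$, a form $f\in M_k(\scrP_{\Gamma}/\k)$ whose $p$-th power $q$-expansion coefficients agree with those of $g$. I would not try to write such an $f$ down directly—the naive candidate $\Frob^*g$ is not divisible by $A^{k_0}$ at supersingular points—but instead establish its existence by duality.

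First I would recall the geometric description of $U$ from $\S\ref{Geometry}$ and dualize. By Serre duality $M_j(\scrP_{\Gamma}/\k)^{\dual}\cong H^1(X,\omega^{\otimes(2-j)}(-\cusps))$, so the transpose $U^{\dual}$ is a map $H^1(X',\omega'^{\otimes(2-k')}(-\cusps)) \to H^1(X,\omega^{\otimes(2-k)}(-\cusps))$. The key identification, and the first genuine step, is that $U^{\dual}$ is a Hasse-twisted Frobenius pullback, $U^{\dual} = A^{\,k_0-2}\cdot \Frob^*$: pulling back along $\Frob$ multiplies $\omega$-degrees and cusp orders by $p$, and the bookkeeping
\[
 p(2-k') + (p-1)(k_0-2) = 2-k, \qquad pk' = k + (p-1)k_0,
\]
shows that composing $\Frob^*$ with multiplication by $A^{k_0-2}$ lands exactly in the target $H^1$. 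Pinning this identity down precisely—matching the exponent $k_0$ to the ramification of $\Frob$ at the cusps and to the order of vanishing of $A$ along $S$—is where the definition of $U$ must be used in earnest.

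Granting the identification, surjectivity of $U$ becomes injectivity of $A^{k_0-2}\Frob^*$. Multiplication by the Hasse power is injective (it is injective on $q$-expansions, being $\equiv 1$), so the crux is injectivity of $\Frob^*$ on the relevant $H^1$. For this I would use the exact sequence $0\to\O_{X'}\to\Frob_*\O_X\to Q\to 0$ with $Q$ locally free of rank $p-1$: twisting by $\L':=\omega'^{\otimes(2-k')}(-\cusps)$ and invoking the projection formula $\Frob_*\Frob^*\L'\cong\L'\otimes\Frob_*\O_X$, one sees that the kernel of $\Frob^*$ on $H^1(X',\L')$ is the image of the connecting map $H^0(X',\L'\otimes Q)\to H^1(X',\L')$. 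Thus injectivity follows once $H^0(X',\L'\otimes Q)=0$.

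This vanishing is the step I expect to be the main obstacle. It is a negativity statement that is sharpest at the boundary $k=p+2$, where $k'=3$ and $\L'$ is barely negative, and it is exactly here—together with the analysis of $A$ and of Kodaira--Spencer at the elliptic points—that the hypotheses $N>4$ and $\Gamma_0$ unipotent for $p\le 3$ must enter, guaranteeing that $X$ is a scheme and that $\L'\otimes Q$ is negative on every component (for $p\ge 5$ one argues on the stack, where $\omega$ retains a well-defined, sufficiently negative degree). Care is also needed at the cusps, where $\Frob$ is ramified and pole orders interact with the twist; I would treat the cuspidal contribution by the explicit local form of $\Frob^*$ on the Tate curve. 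Assembling these, $\Frob^*$, and hence $U^{\dual}$, is injective, so $U\colon M_k(\scrP_{\Gamma}/\k)\twoheadrightarrow M_{k'}(\scrP_{\Gamma}/\k)$ is surjective.
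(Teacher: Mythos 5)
Your high-level strategy is actually the same as the paper's: the paper, too, proves surjectivity by Serre-dualizing to the injectivity of a Frobenius pullback on an $H^1$ (that is exactly how Proposition \ref{TangoThm} is proved), except that it performs the whole argument on the Igusa curve $I_\Gamma$, where by Proposition \ref{MFInter} the space $M_k(\scrP_\Gamma/\k)$ is a $\chi^{k_0-2}$-eigenspace of $H^0(I_\Gamma,\Omega^1_{I_\Gamma/\k}(\cusps+\delta_{k_0}\cdot\SS)\otimes\omega^{k-k_0})$ and $U$ is \emph{literally} the Cartier operator; the Hasse-invariant bookkeeping you must do by hand on the modular curve is absorbed there into the section $a$ and the $(\Z/p\Z)^{\times}$-eigendecomposition. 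Measured against this, your proposal has a genuine gap at its crux: the vanishing $H^0(X',\L'\otimes Q)=0$, which you defer as ``the main obstacle,'' is the entire analytic content of the theorem, and the mechanism you propose for it---negativity of $\L'\otimes Q$ on every component---does not work. Here $Q\cong B^1$, the sheaf of locally exact differentials, a vector bundle of rank $p-1$, and a vector bundle of negative degree can perfectly well have nonzero sections if it is unstable, which $B^1$ in general is. What is needed is a bound on the degrees of the \emph{line subbundles} of $B^1$, and that is precisely Tango's theorem: injectivity of $\Frob^*$ on $H^1(X',\L')$ holds once $\deg(\L'^{-1})$ exceeds the Tango number $n(X)\le\lfloor(2g-2)/p\rfloor$. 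No amount of cusp or elliptic-point bookkeeping substitutes for this input; and, contrary to your guess, the hypotheses for $p\le 3$ have nothing to do with this step---in the paper they serve only to guarantee that $\scrP_\Gamma/\k$ is representable and that $-1$ acts freely on cusp labels, the cohomological input being uniform in $p$.

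There is a second, structural flaw in your reduction. You assert that multiplication by $A^{k_0-2}$ is injective because it is ``injective on $q$-expansions,'' but the $q$-expansion principle is an $H^0$ statement, while your map lives on $H^1$. On $H^1$ of a curve, multiplication by a nonzero section vanishing along $\SS$ has kernel equal to the image of the skyscraper group $H^0$ supported on its divisor; for instance, for any $N$ of positive degree, multiplication by any nonzero section of $N$ maps the one-dimensional space $H^1(X,\Omega^1_{X/\k})$ to $H^1(X,\Omega^1_{X/\k}\otimes N)=0$ and so is certainly not injective. Hence, even granting injectivity of $\Frob^*$, your composite $A^{k_0-2}\circ\Frob^*$---which moreover silently includes a third factor, the map induced by the inclusion $\O(-p\cdot\cusps)\subseteq\O(-\cusps)$ at the cusps, since $\Frob$ multiplies cusp orders by $p$---is not formally injective; showing that the image of $\Frob^*$ meets these kernels trivially requires a diagram chase of exactly the kind the paper performs once, in the proof of Proposition \ref{TangoThm}, to extend Tango's theorem from $H^1(X,\L^{-1})$ to $H^1(X,\L^{-1}\otimes\I_S)$. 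Two smaller debts: the identity $U^{\dual}=A^{k_0-2}\cdot\Frob^*$ is itself only asserted (it is plausible, being the descent to $X_\Gamma$ of the paper's dual picture on $I_\Gamma$, but it needs proof), and your treatment of non-representable $\scrP_\Gamma$ for $p\ge 5$ (``argue on the stack'') is undeveloped, whereas the paper adds an auxiliary prime $\ell>3N$ with $p\nmid|\SL_2(\Z/\ell\Z)|$ and passes to $\SL_2(\Z/\ell\Z)$-invariants using semisimplicity of $\k[\SL_2(\Z/\ell\Z)]$.
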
	

When $\wt{\Gamma}=\Gamma_{\star}(N)$ for $\star=0,1$ and $\k=\F_p$, the endomorphism
$U$ coincides with the usual Atkin operator $U_p$ (see Corollary \ref{ClassicalUp}).
In particular, if $p\ge 5$, so $\wt{M}_k(\wt{\Gamma})\simeq M_{k}(\scrP_{\Gamma}/\F_p)$ 
(by Theorems 1.7.1, and 1.8.1--1.8.2 of \cite{Katz}) 
and $N\neq 2,3$, Theorem \ref{Main} is due to Dewar \cite{Dewar}.
Both Serre's original result and Dewar's refinement of it rely on 
a delicate analysis of 
the interplay between the operators $U_p$, $V_p$, and $\theta$ acting on mod $p$ modular forms. In the present note, we take an algebro-geometric perspective, and show how Theorem \ref{Main}
follows
immediately from a (trivial extension of a) 
general theorem of Tango\footnote{Tango's paper,
which 
appeared the year prior to Serre's \cite{Serre}, is perhaps not as well-known as
it should be.} \cite{Tango} on the behavior of vector bundles under the Frobenius map.  
In this optic, the contractivity of $U_p$ in characteristic $p$ is simply an instance
of the ``Dwork Principle" of analytic continuation along Frobenius.
In particular, we use neither the $\theta$-operator, nor the notion of ``filtration" of a mod $p$ modular form.

\section{Tango's Theorem}
Fix a perfect field $\k$ of characteristic $p$, and write $\sigma:\k\rightarrow \k$
for the $p$-power Frobenius automorphism of $\k$.
Let $X$ be a smooth, proper, and geometrically connected curve over $\k$
of genus $g$.  Attached to $X$ is its {\em Tango number}:
\begin{equation}
	n(X) := \max\left\{ \sum_{x\in X(\o{\k})} \left\lfloor \frac{\ord_{x}(df)}{p}\right\rfloor\ : 
	f\in \o{\k}(X)\setminus \o{\k}(X)^p \right\},
\end{equation}
where $\o{\k}(X)$ is the function field of $X_{\o{\k}}$.  As in Lemma 10 and Proposition 14 of
\cite{Tango}, it is easy to see that $n(X)$ is well-defined and is an integer satisfying
$-1\le n(X)\le \lfloor (2g-2)/p\rfloor$, with the lower bound an equality if and only if $g=0$.

\begin{proposition}[Tango]\label{TangoThm}
	Let $S\neq X$ be a reduced closed subscheme of $X$ with corresponding ideal sheaf $\I_S\subseteq \O_X$,
	and let $\L$ be a line bundle on $X$.  If $\deg \L > n(X)$ then the natural $\sigma$-linear map
	\begin{equation}
		\xymatrix{
			{F^*:H^1(X,\L^{-1}\otimes \I_S)} \ar[r] & {H^1(X,\L^{-p}\otimes \I_S)}
			}\label{Frob}
	\end{equation} 
	induced by pullback by the absolute Frobenius of $X$ is injective,
	and
	the natural $\sigma^{-1}$-linear ``trace map" 
	\begin{equation}
		\xymatrix{
			{F_*:H^0(X,\Omega^1_{X/\k}(S)\otimes\L^p)} \ar[r] & {H^0(X,\Omega^1_{X/\k}(S)\otimes\L)}
			}\label{Cartier}
	\end{equation} 
	given by the Cartier operator $($\cite{CartierNouvelle}, \cite[\S10]{SerreTopology}$)$ is surjective.
\end{proposition}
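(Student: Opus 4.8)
The plan is to reduce both assertions to a single cohomological vanishing and then to invoke Tango's degree estimate. First, on a smooth proper curve the Cartier operator is the Serre-dual of Frobenius pullback (this is exactly the compatibility recorded in \cite{CartierNouvelle} and \cite[\S10]{SerreTopology}): under the perfect pairings identifying $H^1(X,\L^{-1}\otimes\I_S)$ with $H^0(X,\Omega^1_{X/\k}(S)\otimes\L)$ and $H^1(X,\L^{-p}\otimes\I_S)$ with $H^0(X,\Omega^1_{X/\k}(S)\otimes\L^p)$, the map \eqref{Cartier} is the transpose of \eqref{Frob}. Hence \eqref{Cartier} is surjective if and only if \eqref{Frob} is injective, and it suffices to treat the latter. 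As degrees, coherent cohomology, and the Tango number are all unaffected by the faithfully flat base change $\o{\k}/\k$, I may and do assume $\k=\o{\k}$.

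Writing $F\colon X\to X$ for the absolute Frobenius, the second step realizes \eqref{Frob} as a connecting map. Restricting the Frobenius $\O_X\to F_*\O_X$ to $\I_S$ gives a short exact sequence $0\to\I_S\to F_*\I_S\to\B_S\to 0$, in which $\B_S:=F_*\I_S/\im(F)$ is locally free of rank $p-1$; tensoring by $\L^{-1}$ and using the projection formula $F_*\I_S\otimes\L^{-1}\cong F_*(\L^{-p}\otimes\I_S)$ produces
\[
0\to\L^{-1}\otimes\I_S\to F_*(\L^{-p}\otimes\I_S)\to\B_S\otimes\L^{-1}\to 0 .
\]
Because $F$ is a homeomorphism, $H^i(X,F_*(-))=H^i(X,-)$, and the resulting map $H^1(X,\L^{-1}\otimes\I_S)\to H^1(X,\L^{-p}\otimes\I_S)$ is precisely \eqref{Frob}. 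The differential $d$ now eliminates the ideal sheaf: since $\O_X=\ker(d\colon F_*\O_X\to F_*\Omega^1_{X/\k})$, the map $d$ embeds $\B_S$ into $F_*\Omega^1_{X/\k}$, and a local computation at a point of $S$ with uniformizer $t$ shows that $\{\,d\xi:\xi\in(t)\,\}$ is spanned over $\O_X^p$ by $dt,t\,dt,\dots,t^{p-2}\,dt$, i.e.\ is the full sheaf $\B$ of locally exact differentials. Thus $\B_S\otimes\L^{-1}\cong\B\otimes\L^{-1}$, and the long exact cohomology sequence exhibits the kernel of \eqref{Frob} as a quotient of $H^0(X,\B\otimes\L^{-1})=\Hom_{\O_X}(\L,\B)$. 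It therefore suffices to prove that $\deg\L>n(X)$ forces $\Hom_{\O_X}(\L,\B)=0$.

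Equivalently, every line subbundle of $\B$ has degree at most $n(X)$. A nonzero $\phi\colon\L\to\B$ is injective and is determined at the generic point by a single locally exact differential $\omega=df$ with $f\in\o{\k}(X)\setminus\o{\k}(X)^p$, so $\deg\L\le\deg\scrS_\omega$, where $\scrS_\omega\subseteq\B$ is the saturation of $\im\phi$. The heart of the matter is the computation of $\deg\scrS_\omega$, and the point demanding care is that the $\O_X$-module structure on $\B\subseteq F_*\Omega^1_{X/\k}$ is the twisted one, in which a local section acts through its $p$-th power. Consequently the generic line $\o{\k}(X)^p\cdot\omega$ meets the stalk at $x$ in the $\O_X$-module generated by $t^{-p\lfloor\ord_x(\omega)/p\rfloor}\omega$, so that $\omega$ vanishes there to order exactly $\lfloor\ord_x(df)/p\rfloor$; summing over $x$ yields
\[
\deg\scrS_\omega=\sum_{x}\Bigl\lfloor\tfrac{\ord_x(df)}{p}\Bigr\rfloor\le n(X).
\]
This is Tango's local analysis (Lemma 10 and Proposition 14 of \cite{Tango}), and giving a clean self-contained proof of it—so that the floor function emerges transparently from the $p$-divisibility built into the twisted module structure—is the main obstacle; once it is in hand, $\deg\L\le\deg\scrS_\omega\le n(X)$ contradicts $\deg\L>n(X)$, forcing $\Hom_{\O_X}(\L,\B)=0$ and completing the proof.
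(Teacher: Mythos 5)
Your proposal is correct, but it takes a genuinely different route from the paper's. After the common preliminary steps (reduction to $\k=\o{\k}$ and Serre duality exchanging (\ref{Cartier}) with (\ref{Frob})), the paper treats Tango's theorem as a black box for $S=\emptyset$ \cite[Theorem 15]{Tango} and reduces the general case to it by a diagram chase, comparing $0\to H^0(X,\O_X/\I_S)\to H^1(X,\L^{-1}\otimes\I_S)\to H^1(X,\L^{-1})\to 0$ with its Frobenius pullback through $\I_S^p\subseteq\I_S$ and checking injectivity on the skyscraper part by hand. You instead reprove the theorem with $S$ built in from the start: you realize (\ref{Frob}) via the sequence $0\to\L^{-1}\otimes\I_S\to F_*(\L^{-p}\otimes\I_S)\to\B_S\otimes\L^{-1}\to 0$, and your key observation is that $d$ identifies $\B_S=F_*\I_S/\I_S$ with the \emph{full} sheaf $\B$ of locally exact differentials---the ideal only constrains the $p$-th--power part of a local section, which $d$ kills---so the kernel of (\ref{Frob}) is a quotient of $\Hom_{\O_X}(\L,\B)$ exactly as in the $S=\emptyset$ case, and everything reduces to the bound $\deg\L\le\sum_x\lfloor\ord_x(df)/p\rfloor\le n(X)$ for line subbundles of $\B$. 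Both arguments lean on Tango, but yours descends one level, to the local saturation computation rather than to Theorem 15 itself; and your sketch of that computation is sound: writing $f=\sum_{j=0}^{p-1}a_j^p t^j$ locally, one checks $h^p\,df\in\B_x$ iff $\ord_x(h)\ge -\min_j\ord_x(a_j)=-\lfloor\ord_x(df)/p\rfloor$ (here $0\le j-1\le p-2$ forces $\ord_x(df)\not\equiv -1\bmod p$, which is why the floor behaves), so the step you flag as ``the main obstacle'' is no worse a citation than the paper's own use of \cite{Tango}. What your route buys: it explains structurally why the ideal sheaf is harmless; it is uniform in the genus, so the $g=0$ case needs no separate remark (the paper must address it in a footnote, since Tango assumes $g>0$); and it avoids the paper's implicit use of $H^0(X,\L^{-1})=0$ (``using that $\deg(\L)>0$''), which follows from $\deg\L>n(X)$ only when $g\ge 1$. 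What the paper's route buys is brevity: granting Theorem 15, the three-row diagram chase takes only a few lines.
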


\begin{proof}
	First note that the formation of $(\ref{Frob})$ and $(\ref{Cartier})$ is compatible,
	via $\sigma$- (respectively $\sigma^{-1}$-) linear extension,
	with any scalar extension $\k\rightarrow \k'$ to a perfect field $\k'$; we may therefore
	assume that $\k$ is algebraically closed.
	As the two assertions are dual\footnote{Note that $\kappa$-linear duality
	interchanges $\sigma$-linear maps with $\sigma^{-1}$-linear ones.} by Serre duality \cite[\S10, Proposition 9]{SerreTopology}, it suffices to prove the injectivity of (\ref{Frob}).
	The case $S=\emptyset$ is Tango's Theorem\footnote{Strictly speaking, Tango requires $g>0$;  however, by tracing through Tango's argument---or by direct calculation---one sees easily that 
	the result holds when $g=0$ as well.} \cite[Theorem 15]{Tango}. We reduce the general case to this one 
	as follows: using that
	$\deg(\L) >0$ and that $\O_X/\I_S^j$ is a skyscraper sheaf for all $j >0$, one finds a commutative diagram
	with exact rows
	\begin{equation*}
	\xymatrix{
		0 \ar[r] & {H^0(X,\O_X/\I_S)} \ar[r]\ar[d]_-{F^*} & {H^1(X,\L^{-1}\otimes \I_S)} \ar[r]\ar[d]_-{F^*} & 
		{H^1(X,\L^{-1})} \ar[r]\ar[d]_-{F^*} & 0\\
		0 \ar[r] & {H^0(X,\O_X/\I_S^p)} \ar[r]\ar[d] & {H^1(X,\L^{-p}\otimes \I_S^p)} \ar[r]\ar[d] &
		 {H^1(X,\L^{-p})} \ar[r]\ar@{=}[d] & 0\\
		0 \ar[r] & {H^0(X,\O_X/\I_S)} \ar[r] & {H^1(X,\L^{-p}\otimes \I_S)} \ar[r] & {H^1(X,\L^{-p})} \ar[r] & 0
	}
	\end{equation*}
	in which the lower vertical arrows are induced by the inclusion of ideal sheaves $\I_S^p\subseteq \I_S$.
	Using that $\k=\o{\k}$ and identifying $H^0(X,\O_X/\I_S)$ with $\k^{S}$, the left vertical composite is easily seen to coincide
	with the map $\oplus_S \sigma:\k^S\rightarrow \k^S$ which is $\sigma$ on each factor; it is therefore
	injective.  As the right vertical composite map is injective by Tango's Theorem, an easy diagram chase
	finishes the proof.
\end{proof}

\section{Modular forms mod \texorpdfstring{$p$}{p} as differentials on the Igusa curve}\label{Geometry}

In order to apply Tango's Theorem to prove Theorem \ref{Main},
we must recall Katz's geometric definition of mod $p$
modular forms, and Serre's interpretation of them as certain meromorphic
differentials on the Igusa curve.  

Let us write\footnote{Here, we follow the notation of \cite[\S9.4]{KM}: By definition
$\Z[\zeta_N]$ is the finite free $\Z$-algebra
$\Z[X]/\Phi_N(X)$, where $\Phi_N$ is the $N$-th cyclotomic polynomial and
$\zeta_N$ corresponds to $X$, equipped with its natural Galois action of $(\Z/N\Z)^{\times}$.} $R_{\Gamma}:=(\Z[\zeta_N])^{\det(\Gamma)}$,
and for any $R_{\Gamma}$-algebra $A$ denote by $\scrP_{\Gamma}/A$
the moduli problem $([\Gamma(N)]/\Gamma)^{R_{\Gamma}\can}\otimes_{R_{\Gamma}} A$
on $(\Ell/A)$ (see \S3.1, \S7.1, 9.4.2, and 10.4.2 of \cite{KM})
and by $M_k(\scrP_{\Gamma}/A)$ the space of weight $k$ Katz modular forms for $\scrP_{\Gamma}/A$
({\em e.g.} \cite[\S6]{Ulmer}) that are holomorphic at $\infty$ in the sense of \cite[\S1.2]{Katz}.
Equivalently, $M_k(\scrP_{\Gamma}/A)$ is the $A$-submodule of level $N$, weight $k$
modular forms in the sense of \cite[\Rmnum{7}.3.6]{DR} that are invariant under
the natural action of $\Gamma_0$.
Viewing $\C$ as an $R_{\Gamma}$-algebra via $\zeta_N\mapsto \exp(2\pi i/N)$,
we remark that $M_k(\scrP_\Gamma/\C)$
{\em is} the ``classical" space of weight $k$ modular forms for $\wt{\Gamma}$ over $\C$ defined
via the trancendental theory \cite[\Rmnum{7}.4]{DR}.

Now fix a ring homomorphism $R_{\Gamma}\rightarrow \kappa$ with $\kappa$ a perfect field of
characteristic $p$.  From here until the end of the section we will assume that 
$\scrP_{\Gamma}/\kappa$ is representable and that $-1$
acts without fixed points on the space of cusp-labels for $\Gamma$
(see \cite[\S10.6]{KM} and {\em c.f.} \cite[10.13.7--8]{KM}).  We will later
explain how to relax these hypotheses to those of Theorem \ref{Main}.
We write $Y_\Gamma$ (respectively $X_\Gamma$) for the associated (compactified) moduli scheme;
by \cite[10.13.12]{KM}, one knows that $X_\Gamma$ is a proper, smooth, and geometrically 
connected curve over $\kappa$.  Writing $\pr:\E\rightarrow Y_\Gamma$ for the universal elliptic
curve, our hypothesis that $-1$ acts without fixed points ensures that the line bundle
$\omega_{\Gamma}:=\pr_*\Omega^1_{\E/Y_\Gamma}$ on $Y_\Gamma$ admits a canonical 
extension, again denoted $\omega_{\Gamma}$, to a line bundle on $X_\Gamma$ \cite[10.13.4, 10.13.7]{KM}.
By definition, $M_k(\scrP_\Gamma/\k) = H^0(X_\Gamma,\omega_{\Gamma}^{k})$.

Let $I_{\Gamma}$ be the {\em Igusa curve of level $p$ over $X_\Gamma$}; by definition,
$I_{\Gamma}$ is the compactified moduli scheme associated to the simultaneous problem
$[\scrP_{\Gamma}/\k,[\Ig(p)]]$ on $(\Ell/\k)$ \cite[\S12]{KM}. 
By \cite[12.7.2]{KM}, the Igusa curve is proper, smooth, and geometrically connected,
and the natural map 
$\pi:I_{\Gamma}\rightarrow X_\Gamma$,
is finite \'etale and Galois with group $(\Z/p\Z)^{\times}$
outside the supersingular points,  and totally ramified over every supersingular 
point.  We define $\omega:=\pi^*\omega_{\Gamma}$, and recall \cite[12.8.2--3]{KM} that 
there is
a canonical section $a\in H^0(I_\Gamma,\omega)$ which has $q$-expansion
equal to 1, vanishes to order 1
at each supersingular point, 
and on which $d\in (\Z/p\Z)^{\times}$ acts (via its action on $I_\Gamma$) through $\chi^{-1}$, 
for $\chi:(\Z/p\Z)^{\times}=\F_p^{\times}\hookrightarrow \F_p$ the mod $p$ Teichm\"uller
character.
The following is a straightforward generalization of a theorem of Serre;
see \cite[\S12.8]{KM} and {\em c.f.} Propositions 5.7--5.10 of \cite{tameness}.

\begin{proposition}\label{MFInter}
	Fix an integer $k\ge 2$ and let $k_0\le k$ be any integer with $2\le k_0 \le p+1$.  
	The map $f\mapsto f/a^{k_0-2}$ induces an natural isomorphism of $\k$-vector spaces
	\begin{equation}
		M_k(\scrP_\Gamma/\k)\simeq H^0(I_\Gamma,\Omega^1_{I_\Gamma/\k}(\cusps + \delta_{k_0}\cdot\SS)\otimes\omega^{k-k_0})(\chi^{k_0-2}),
		\label{IgusaMF}
	\end{equation}
	where $\delta_{k_0}=1$ when $k_0=p+1$ and is zero otherwise; here,
	$\SS$, $\cusps$ are the reduced supersingular and cuspidal divisors, respectively.
\end{proposition}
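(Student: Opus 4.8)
The map in question is the composite of pullback along $\pi$ with division by the canonical section $a^{k_0-2}$, so the whole statement reduces to two geometric inputs: the behaviour of the Kodaira--Spencer isomorphism under the covering $\pi\colon I_\Gamma\to X_\Gamma$, and the local constraints imposed at the totally ramified supersingular points by the $(\Z/p\Z)^\times$-action. The plan is first to convert the target, which is phrased in terms of $\Omega^1_{I_\Gamma/\k}$, into a twist of a power of $\omega$, and then to match the resulting pole conditions against those visibly satisfied by $f/a^{k_0-2}$ for $f\in M_k(\scrP_\Gamma/\k)$.

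First I would record the Kodaira--Spencer isomorphism $\omega_\Gamma^{\otimes 2}\cong\Omega^1_{X_\Gamma/\k}(\cusps)$ on the base curve, valid under the representability and fixed-point hypotheses in force in this section. Pulling back along $\pi$ and comparing $\pi^*\Omega^1_{X_\Gamma/\k}$ with $\Omega^1_{I_\Gamma/\k}$ through the different of $\pi$ yields the key identity
\[ \Omega^1_{I_\Gamma/\k}(\cusps)\;\cong\;\omega^{\otimes 2}\bigl((p-2)\cdot\SS\bigr). \]
Here I use that $\pi$ is finite \'etale over the cusps and tamely totally ramified of degree $p-1$ over each supersingular point (so that $p\nmid p-1$ and the different exponent there is $(p-1)-1=p-2$), together with $\pi^*\cusps=\cusps$ and $\omega=\pi^*\omega_\Gamma$. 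Substituting this into the target rewrites it as $H^0\bigl(I_\Gamma,\omega^{k-k_0+2}((p-2+\delta_{k_0})\cdot\SS)\bigr)(\chi^{k_0-2})$.

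Next I would analyse the eigencharacter condition at a supersingular point $\tilde s$. Since $a$ vanishes simply at $\tilde s$, is a unit at the cusps, and transforms through $\chi^{-1}$, while $\omega$ carries a $(\Z/p\Z)^\times$-invariant local trivialization pulled back from $X_\Gamma$, the action on a local uniformizer at $\tilde s$ is forced to be through $\chi^{-1}$. Consequently any local section of an equivariant twist of a power of $\omega$ lying in the $\chi^{k_0-2}$-eigenspace has its $t$-expansion supported in orders $n\equiv 2-k_0\pmod{p-1}$. Combined with the pole bound $p-2+\delta_{k_0}$, this shows the effective lowest admissible order at each $\tilde s$ is exactly $-(k_0-2)$: for $2\le k_0\le p$ the bound $p-2$ already permits order $-(k_0-2)$ while the congruence forbids anything smaller, whereas for $k_0=p+1$ one has $k_0-2=p-1>p-2$, which is precisely why the extra $\delta_{k_0}\cdot\SS$ is needed to admit a pole of order $p-1$.

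With these two computations in hand the verification is formal. The section $f/a^{k_0-2}$ lies in the $\chi^{k_0-2}$-eigenspace (as $\pi^*f$ is invariant and $a^{k_0-2}$ is $\chi^{-(k_0-2)}$-isotypic), has a pole of order at most $k_0-2$ along $\SS$ and none at the cusps, hence lands in the target; injectivity is clear. For surjectivity I would multiply a given $\eta$ in the target by $a^{k_0-2}$: the order computation above shows $\eta\cdot a^{k_0-2}$ is holomorphic at the supersingular points, it is holomorphic at the cusps because $a$ is a unit there, and it is $(\Z/p\Z)^\times$-invariant, so it descends along $\pi$ to a section of $\omega_\Gamma^{\otimes k}$, i.e.\ to an element of $M_k(\scrP_\Gamma/\k)$ mapping to $\eta$. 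I expect the entire difficulty to sit in the third paragraph: correctly bookkeeping the interaction between the tame ramification of $\pi$ and the $\chi^{k_0-2}$-eigencharacter at the supersingular points, which is what forces the correction term $\delta_{k_0}$ and makes the source and target pole bounds coincide.
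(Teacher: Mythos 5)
Your proposal is correct and follows essentially the same route as the paper: pull back the Kodaira--Spencer isomorphism with the tame-ramification correction $(p-2)\cdot\SS$, send $f\mapsto \pi^*f/a^{k_0-2}$, check the pole and eigencharacter conditions, and invert by multiplying by $a^{k_0-2}$ and descending the resulting invariant holomorphic section along $\pi$. The only (harmless) divergence is in the surjectivity step at the supersingular points: you bound the admissible pole order via the eigencharacter congruence $n\equiv 2-k_0 \pmod{p-1}$ on local $t$-expansions \emph{before} descending, whereas the paper first descends the invariant meromorphic section $f=a^{k_0-2}h$ and then deduces holomorphy from the divisibility $(p-1)\mid \ord_{\tilde{s}}(f)$ forced by total ramification---two equivalent bookkeepings of the same $(\Z/p\Z)^{\times}$-structure.
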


\begin{proof}
	The proof is a straightforward adaptation of Propositions 5.7--5.10 of \cite{tameness};
	for the convenience of the reader, we sketch the argument.  Thanks to \cite[10.13.11]{KM},
	the Kodaira-Spencer map \cite[10.13.10]{KM} provides an isomorphism of line bundles 
	$\omega^{2}_{\Gamma}\simeq \Omega^1_{X_\Gamma/\k}(\cusps)$
	on $X_\Gamma$ which, after pullback along $\pi$, gives an isomorphism
	\begin{equation}
		\omega^{2}\simeq \Omega^1_{I_\Gamma/\k}(-(p-2)\SS + \cusps)\label{KS}
	\end{equation} 
	of line bundles
	on $I_\Gamma$ as $\pi$ is \'etale outside $\SS$ and totally (tamely) ramified at each
	supersingular point. 

	Since $a\in H^0(I_{\Gamma},\omega)$ has simple zeroes at the supersingular points,
	via (\ref{KS}) any global section $f$ of $\omega^{k}_{\Gamma}$ 
	induces a global section $\pi^*f/a^{k_0-2}$ of $\Omega^1_{I_\Gamma/\k}(\cusps+\delta_{k_0}\cdot\SS)\otimes \omega^{k-k_0}$
	on which $(\Z/p\Z)^{\times}$ acts through $\chi^{k_0-2}$; thus the map (\ref{IgusaMF})
	is well-defined.  
	Since the $q$-expansion of $a$ is $1$ and $I_{\Gamma}$ is geometrically connected, 
	the $q$-expansion principle then shows that (\ref{IgusaMF}) is injective.
	To prove surjectivity, observe that by (\ref{KS}), a global section of 
	$\Omega^1_{I_\Gamma/\k}(\cusps+\delta_{k_0}\cdot\SS)\otimes \omega^{k-k_0}$ gives a meromorphic section $h$ of 
	$\omega^{k-k_0+2}$ satisfying $\ord_x(h) \ge -(p-1)$ at each supersingular point $x$, 
	with equality possible only when $k_0=p+1$.
	If $h$ lies in the $(k_0-2)$-eigenspace of the action of $(\Z/p\Z)^{\times}$, then 
	$f:=a^{k_0-2}h$ descends to a meromorphic section of $\omega^{k}_{\Gamma}$ over $X_\Gamma$
	satisfying 
	$$(p-1)\ord_x(f) = \ord_{x}(h) + k_0 -2 \ge k_0-p-1$$
	at each supersingular point $x\in X_{\Gamma}(\o{\k})$,
	with equality possible only when $k_0=p+1$.
	Since the left side is a multiple of $p-1$ and $k_0 \ge 2$, we must have $\ord_x(f) \ge 0$
	in all cases, and $f$ is a global (holomorphic) section of $\omega^{k}_{\Gamma}$ over $X_\Gamma$ 
	with $\pi^*f/a^{k_0-2}=h$.
\end{proof}

Using Proposition \ref{MFInter}, the Cartier operator $F_*$ on meromorphic differentials
induces, by ``transport of structure", a $\sigma^{-1}$-linear endomorphism
$U:M_k(\scrP_{\Gamma}/\k)\rightarrow M_k(\scrP_{\Gamma}/\k)$.  If $G$
is any group of automorphisms of $X(\Gamma)$, then the action of $G$ commutes with $F_*$
(ultimately because the $p$-power map in characteristic $p$ commutes with all ring homomorphisms),
and we likewise obtain a $\sigma^{-1}$-linear endomorphism $U$ of $M_k(\scrP_{\Gamma}/\k)^G$. 
This allows us to define $U$ even when $\scrP_{\Gamma}/\k$ is not representable as follows.
Choose a prime $\ell > 3N$, and let
$\Gamma'$ be the unique subgroup of $\GL_2(\Z/N\ell\Z)$ projecting to the trivial
subgroup of $\GL_2(\Z/\ell\Z)$ and to $\Gamma$ in $\GL_2(\Z/N\Z)$.
Then for any perfect field $\k'$ of characteristic $p$ admitting a map from $R_{\Gamma'}$,
the moduli problem $\scrP_{\Gamma'}/\k'$ is representable, there is a natural action of 
$G:=\SL_2(\Z/\ell\Z)$ on $M_k(\scrP_{\Gamma'}/\k')$, and one has
$M_k(\scrP_{\Gamma}/\k')=M_k(\scrP_{\Gamma'}/\k')^G$ ({\em c.f.} \cite[\Rmnum{7}.3.3]{DR} and
\cite[\S1.2]{Katz}).  
Since $M_k(\scrP_{\Gamma}/\k)\otimes_{\k}\k'\simeq M_k(\scrP_{\Gamma}/\k')$,
we obtain the desired endomorphism $U$ of $M_k(\scrP_{\Gamma}/\k)$ by descent,
and it is straightforward to check that it is independent of our initial choices of $\ell$ and $\k'$.
By post-composition with the $\sigma$-linear isomorphism\footnote{Explicitly, 
this isomorphism sends $f\in M_k(\scrP_{\Gamma}/\k)$ to the modular form $f^{\sigma}$
defined by $f^{\sigma}(E,\alpha):=(E^{\sigma},\alpha^{\sigma})$} 
$M_k(\scrP_{\Gamma}/\k)\simeq M_k(\scrP_{\Gamma}^{\sigma^{-1}}/\k)$ induced by the ``exotic isomorphism" of moduli problems 
$\scrP_{\Gamma}/\k \simeq \scrP_{\Gamma}^{\sigma^{-1}}/\k$ \cite[12.10.1]{KM} we obtain a $\k$-linear
map $U^{\#}:M_k(\scrP_{\Gamma}/\k)\rightarrow M_k(\scrP_{\Gamma}^{\sigma^{-1}}/\k)$.
When $\scrP_{\Gamma}$ is defined over\footnote{A sufficient condition for this to happen
is that $\det(\Gamma)$ contain the residue class of $p\bmod N$.} 
$\F_p$ in the sense that $R_{\Gamma}$
admits a (necessarily unique) surjection to $\F_p$, one has canonically 
$\scrP_{\Gamma}/\F_p = \scrP_{\Gamma}^{\sigma^{-1}}/\F_p$ as problems on $(\Ell/\F_p)$,
and $U^{\#}$ is an endomorphism of $M_k(\scrP_{\Gamma}/\F_p)$.
The maps $U$ and $U^{\#}$ are natural generalizations of Atkin's $U_p$-operator:

\begin{proposition}\label{UpRel}
	Suppose that $\scrP_{\Gamma}/\k$ is representable and
	let $c$ be any cusp of $X(\Gamma)$ defined over $\k$.  Then $q^{1/e}$
	is a uniformizing parameter at $c$ for some divisor $e$ of $N$, and
	for any $f\in M_k(\scrP_{\Gamma}/\k)$, the formal expansions of 
	$Uf$ at $c$ and of $U^{\#}f$ at $c^{\sigma^{-1}}$ are given by
	\begin{equation*}
		Uf = \sum_{n\ge 0}\sigma^{-1}(a_{np}) q^{n/e}\quad\text{and}\quad
		U^{\#}f = \sum_{n\ge 0}a_{np} q^{n/e}\ \text{respectively, where}\ f = \sum_{n\ge 0} a_n q^{n/e}.
	\end{equation*}	
\end{proposition}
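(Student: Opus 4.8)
The plan is to reduce the entire statement to an explicit local computation of the Cartier operator $F_*$ on $q$-expansions at the cusp $c$, carried out on the Igusa curve through the isomorphism of Proposition \ref{MFInter}. Since $c$ is $\k$-rational, I would first record its local structure: the Tate-curve description of the cusps of $X_\Gamma$ (\cite{KM}, \cite{DR}) shows that the completed local ring at $c$ is $\k[[q^{1/e}]]$ for the width $e\mid N$ of $c$, so $t:=q^{1/e}$ is a uniformizer; as $\pi$ is \'etale over $c$, the same $t$ is a uniformizer at the chosen point of $I_\Gamma$ above $c$. The canonical section $a\in H^0(I_\Gamma,\omega)$ of $q$-expansion $1$ is exactly the Tate-curve trivialization of $\omega$ near $c$, so writing $f\in M_k(\scrP_\Gamma/\k)$ as $f=\widehat f\cdot a^k$ with $\widehat f=\sum_n a_n t^n$ recovers its $q$-expansion. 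Under the Kodaira--Spencer isomorphism $(\ref{KS})$, normalized so that $a^2\mapsto dq/q = e\,dt/t$, the image $\pi^*f/a^{k_0-2}=\widehat f\, a^{k-k_0+2}$ becomes the twisted differential $e\,\widehat f\,\frac{dt}{t}\otimes a^{k-k_0}$, a local section of $\Omega^1\otimes\omega^{k-k_0}$.

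Next I would apply $F_*$, the crux being the elementary identity that for a power series $\sum_{n\ge 0}b_n t^n$,
\[
F_*\left(\Bigl(\sum_{n\ge 0}b_n t^n\Bigr)\frac{dt}{t}\right)=\Bigl(\sum_{m\ge 0}b_{mp}^{1/p}\,t^m\Bigr)\frac{dt}{t},
\]
i.e.\ the Cartier operator extracts the $p$-th subseries and applies $\sigma^{-1}=(\cdot)^{1/p}$ to the coefficients; this is checked directly from $F_*(dt/t)=dt/t$ and $F_*(dg)=0$. Because $k\equiv k_0\pmod p$, the twist is a $p$-th power: $\omega^{k-k_0}=\L^p$ with $\L=\omega^{(k-k_0)/p}$, trivialized near $c$ by $s^p$ for $s:=a^{(k-k_0)/p}$. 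Under the canonical identification $\L^p\simeq F^*\L$ sending $s^p\mapsto F^*s$, the twisted operator of Proposition \ref{TangoThm} sends $\eta\otimes s^p$ to $F_*(\eta)\otimes s$. Applying this to our section, and using that $e\in\F_p^\times$ is fixed by $\sigma^{\pm1}$, yields $e\bigl(\sum_m\sigma^{-1}(a_{mp})\,t^m\bigr)\frac{dt}{t}\otimes a^{(k-k_0)/p}$.

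Finally I would transport back through Proposition \ref{MFInter} for weight $k'=(k-k_0)/p+k_0$ (which shares the same $k_0$ and satisfies $k_0\le k'$): multiplying by $a^{k_0-2}$ and undoing Kodaira--Spencer, so that $\frac{dt}{t}=\tfrac1e\,\frac{dq}{q}\mapsto \tfrac1e a^2$ and the two factors of $e$ cancel, gives $Uf=\bigl(\sum_m\sigma^{-1}(a_{mp})\,t^m\bigr)a^{k'}$, that is $Uf=\sum_m\sigma^{-1}(a_{mp})\,q^{m/e}$ at $c$. The assertion for $U^{\#}$ then follows by post-composing with the $\sigma$-linear exotic isomorphism, which Frobenius-twists coefficients and carries $c$ to $c^{\sigma^{-1}}$; thus $U^{\#}f=\sum_m\sigma\bigl(\sigma^{-1}(a_{mp})\bigr)q^{m/e}=\sum_m a_{mp}\,q^{m/e}$ expanded at $c^{\sigma^{-1}}$.

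The main obstacle is bookkeeping rather than any deep input: correctly realizing the twisted Cartier operator through the $p$-th-power trivialization $a^{k-k_0}=(a^{(k-k_0)/p})^p$, and tracking the Kodaira--Spencer normalization constant $e$ so that it cancels between the forward and backward passes. This is precisely the point at which the Frobenius structure forces $F_*$ to select the $p$-th subseries, making $U$ the characteristic-$p$ incarnation of Atkin's $U_p$.
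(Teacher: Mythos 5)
Your proposal is correct and takes essentially the same route as the paper: the paper's proof simply cites the local description of the Cartier operator on $q$-expansions (\cite[\S10, Proposition 8]{SerreTopology}) together with the arguments of Propositions 2.8 and 5.7--5.9 of \cite{tameness}, which amount to exactly the Tate-curve/Kodaira--Spencer bookkeeping you carry out explicitly. Your details check out --- including the normalization $a^2\mapsto dq/q=e\,dt/t$ with $e\in\F_p^{\times}$ fixed by $\sigma^{\pm 1}$, the projection-formula description $\eta\otimes s^p\mapsto F_*(\eta)\otimes s$ of the twisted Cartier operator, and the observation that Proposition \ref{MFInter} applies at weight $k'$ with the same $k_0$ since it only requires $2\le k_0\le p+1$ and $k_0\le k'$.
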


\begin{proof}
	Using the well-known local description of the Cartier operator on meromorphic 
	differentials ({\em e.g.} \cite[\S10, Proposition 8]{SerreTopology}), the result
	follows easily from the arguments of Propositions 2.8 and 5.7 of \cite{tameness};
	see also (the proof of) \cite[Proposition 5.9]{tameness}.  
\end{proof}

\begin{corollary}\label{ClassicalUp}
	Suppose that $\wt{\Gamma} = \Gamma_{\star}(N)$ for $\star=0,1$.
	Then $R_{\Gamma}=\Z$ and the resulting endomorphisms
	$U$ and $U^{\#}$ of $M_k(\scrP_{\Gamma}/\F_p)$ coincide with the Atkin operator $U_p$,
	whether or not $\scrP_{\Gamma}/\F_p$ is representable.
\end{corollary}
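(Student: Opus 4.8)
The plan is to reduce the entire statement to a comparison of $q$-expansions, where Proposition~\ref{UpRel} supplies the essential computation. First I would verify the assertion $R_{\Gamma}=\Z$. For $\wt{\Gamma}=\Gamma_{\star}(N)$ the relevant subgroup $\Gamma\subseteq\GL_2(\Z/N\Z)$ contains the diagonal matrices $\mathrm{diag}(1,d)$ for every unit $d$, so $\det(\Gamma)=(\Z/N\Z)^{\times}$; as this acts on $\Z[\z_N]$ as the full Galois group $\Gal(\Q(\z_N)/\Q)$, we obtain $R_{\Gamma}=\Z[\z_N]^{(\Z/N\Z)^{\times}}=\Z$. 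In particular $R_{\Gamma}$ surjects onto $\F_p$, so $\scrP_{\Gamma}$ is defined over $\F_p$ and $U^{\#}$ is genuinely an endomorphism of $M_k(\scrP_{\Gamma}/\F_p)$.

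Next I would note that over $\k=\F_p$ all the Frobenius twists degenerate: $\sigma$ is the identity on $\F_p$, the exotic isomorphism $\scrP_{\Gamma}/\F_p\simeq\scrP_{\Gamma}^{\sigma^{-1}}/\F_p$ is the identity, and hence $U$ and $U^{\#}$ are one and the same $\F_p$-linear endomorphism. It therefore suffices to identify this common operator with the Atkin operator $U_p$, which acts on $q$-expansions by $\sum a_nq^n\mapsto\sum a_{np}q^n$. When $\scrP_{\Gamma}/\F_p$ is representable this is immediate: choosing a cusp $c$ defined over $\F_p$, Proposition~\ref{UpRel} gives that the expansion of $U^{\#}f$ at $c$ is $\sum_{n\ge0}a_{np}q^{n/e}$ whenever $f=\sum_{n\ge0}a_nq^{n/e}$, which is exactly the expansion of $U_pf$; since $X_{\Gamma}$ is geometrically connected, the $q$-expansion principle forces $U^{\#}f=U_pf$ for all $f$.

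To remove the representability hypothesis I would run the same level-$\ell$ rigidification used to construct $U$ in the first place. Fixing $\ell>3N$ and the auxiliary group $\Gamma'$, the problem $\scrP_{\Gamma'}/\o{\F}_p$ is representable, with $M_k(\scrP_{\Gamma}/\o{\F}_p)=M_k(\scrP_{\Gamma'}/\o{\F}_p)^{G}$ for $G=\SL_2(\Z/\ell\Z)$, compatibly with $q$-expansions, and $U$ on the source is by construction the restriction of $U$ on the target. Given $f\in M_k(\scrP_{\Gamma}/\F_p)$, its base change $f_{\o{\F}_p}$ lies in this invariant subspace and has $q$-expansion coefficients $a_n\in\F_p$; applying Proposition~\ref{UpRel} on the representable cover, the expansion of $Uf_{\o{\F}_p}$ is $\sum_{n\ge0}\sigma^{-1}(a_{np})q^{n/e}=\sum_{n\ge0}a_{np}q^{n/e}$, the second equality because each $a_{np}\in\F_p$ is fixed by $\sigma^{-1}$. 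Descending along $\F_p\to\o{\F}_p$, the operator $U$ on $M_k(\scrP_{\Gamma}/\F_p)$ produces the same $q$-expansion, matching $U_pf$, so the $q$-expansion principle yields $U=U^{\#}=U_p$ in all cases.

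The step I expect to require the most care is precisely this descent: over $\o{\F}_p$ the geometric operator $U$ genuinely carries a $\sigma^{-1}$-twist and does \emph{not} agree with the naive $q$-expansion operator $U_p$ on forms with non-rational coefficients, so it is essential to exploit the $\F_p$-rationality of the $q$-expansion of $f$ to see that the twist disappears after descent. Once this compatibility of $q$-expansions through rigidification and descent is in hand, the geometric connectedness of the relevant curves together with the $q$-expansion principle closes the argument uniformly.
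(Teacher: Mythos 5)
Your proposal is correct and takes essentially the same route as the paper: the paper's proof likewise notes that $R_{\Gamma}=\Z$ because $\det(\Gamma)=(\Z/N\Z)^{\times}$, reduces to the representable case via the level-$\ell$ rigidification already used to define $U$, and concludes from Proposition \ref{UpRel} together with the $q$-expansion principle. Your write-up simply makes explicit the details the paper leaves to the reader, in particular that over $\F_p$ the $\sigma^{-1}$-twist disappears on $q$-expansions since the coefficients are fixed by $\sigma$.
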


\begin{proof}
	That $R_{\Gamma}=\Z$ is clear, as $\det(\Gamma)=(\Z/N\Z)^{\times}$.
	By the discussion above, we may reduce to the representable case, and the result then
	follows from Proposition \ref{UpRel} and the $q$-expansion principle.
\end{proof}	

\section{Proof of Theorem \ref{Main}}\label{ThPf}

We now prove Theorem \ref{Main}. Fix $k$ and let $k_0$ and $k'$ be 
as in the statement of Theorem \ref{Main}.
First suppose that $\scrP_{\Gamma}\otimes_{R_{\Gamma}}\k$ is representable and that $-1$
acts without fixed points on the cusp-labels of $\Gamma$.
Using (\ref{KS}) and the fact that $a$ has simple zeroes along $\SS$ we compute ({\em c.f.} \cite[12.9.4]{KM})
\begin{equation*}
	\deg \omega = \frac{2g-2}{p} + \frac{1}{p}\deg(\cusps) > \left\lfloor \frac{2g-2}{p}\right\rfloor
	\ge n(I_\Gamma)
\end{equation*}
where $g$ is the genus of $I_{\Gamma}$.
Applying Proposition \ref{TangoThm} with $X=I_\Gamma$, $S=\cusps+\delta_{k_0}\cdot\SS$, 
and $\L=\omega$, we conclude from (\ref{Cartier}) and the relation $k-k_0=p(k'-k_0)$ that the Cartier operator
\begin{equation*}
	\xymatrix{
		{F_*:H^0(I_\Gamma,\Omega^1_{I_\Gamma/\k}(\cusps+\delta_{k_0}\cdot\SS)\otimes \omega^{k-k_0})} \ar[r] &
		{H^0(I_\Gamma),\Omega^1_{I_\Gamma/\k}(\cusps+\delta_{k_0}\cdot\SS)\otimes \omega^{k'-k_0})}	
	}
\end{equation*}
is surjective whenever $k-k_0 \ge p$.  Passing to $\chi^{k_0-2}$-eigenspaces for $(\Z/p\Z)^{\times}$ and appealing to
Proposition \ref{MFInter} and Corollary \ref{ClassicalUp} then completes the proof in this case.  

Now when $p\le 3$, the hypotheses $N>4$ and $\wt{\Gamma}\subseteq \Gamma_1(N)$ of Theorem \ref{Main}
ensure that $\scrP_{\Gamma}\otimes_R\k$ is representable (as it maps to the moduli problem $[\Gamma_1(N)]$,
which is representable for $N\ge 4$ by \cite[10.9.6]{KM})
and that $-1$ acts without fixed points on the cusp-labels of $\Gamma$ \cite[10.7.4]{KM}.
If $p\ge 5$, we may choose a prime $\ell > 3N$ with $\ell\not\equiv 0,\pm 1\bmod p$, so that
$p\nmid|\SL_2(\Z/\ell\Z)|$.  Then for $N':=N\ell$ and 
$\Gamma':=1\times \Gamma \subseteq \SL_2(\Z/\ell\Z)\times \SL_2(\Z/N\Z)=\SL_2(\Z/N\ell\Z)$, we have 
(after passing to an appropriate extension $\k'$ of $\k$) that
$\scrP_{\Gamma'}\otimes_{R_{\Gamma'}}\k'$ is representable with $-1$ acting freely on the cusp-labels of $\Gamma'$ \cite[10.7.1, 10.7.3]{KM}.
We conclude that the $U$-operator induces a surjection
of $\kappa[\SL_2(\Z/\ell\Z)]$-modules
$M_{k}(\scrP_{\Gamma'}/\k')\twoheadrightarrow M_{k'}(\scrP_{\Gamma'}/\k')$.  
Our choice of $\ell$ ensures that the ring $\kappa[\SL_2(\Z/\ell\Z)]$
is semisimple, so passing to $\SL_2(\Z/\ell\Z)$-invarants is exact.  As the space of $\SL_2(\Z/\ell\Z)$-invariant 
weight $k$ modular forms for $\Gamma'$ coincides with $M_{k}(\scrP_{\Gamma}/\k')$ ({\em c.f.} the definition of $U$ in \S\ref{Geometry}), passing to invariants and descending from $\k'$ to $\k$
then completes the proof of Theorem \ref{Main} in the general case.

\bibliographystyle{amsalpha_noMR}
\bibliography{mybib}
\end{document}